\newtheorem{assumption}{Assumption}
\newtheorem{theorem}{Theorem}
\theoremstyle{remark}
\newtheoremstyle{mytheoremstyle}%
{\topsep}{\topsep}
{}{}            
{}{}            
{0.5em}
{\thmname{\@ifempty{#3}{#1}\@ifnotempty{#3}{#3}}}
\theoremstyle{mytheoremstyle}
\newsavebox\CBox
 \def\newblock{\ }%
\DeclareSymbolFont{matha}{OML}{txmi}{m}{it}
\DeclareMathSymbol{\varv}{\mathord}{matha}{118}
\newcommand{\vast}{\bBigg@{4}}
\newcommand{\Vast}{\bBigg@{5}}   
\def\E{\mathbb{E}}
\begin{document}

\title{\vspace{-1em}
Approximating Systems Fed by Poisson Processes with Rapidly Changing Arrival Rates\vspace{-0.6em}}

\author{
	{\normalsize Zeyu Zheng} \vspace{-0.6em}\\
	{\footnotesize Department of Management Science and Engineering} \vspace{-0.8em}\\
	{\footnotesize Stanford University, zyzheng@stanford.edu}\\
	{\normalsize Harsha Honnappa} \vspace{-0.6em}\\
	{\footnotesize School of Industrial Engineering} \vspace{-0.8em}\\
	{\footnotesize Purdue University, honnappa@purdue.edu}\\
	{\normalsize Peter W. Glynn} \vspace{-0.6em}\\
	{\footnotesize Department of Management Science and Engineering} \vspace{-0.8em}\\
	{\footnotesize Stanford University, glynn@stanford.edu}\\
}

\date{July 17, 2018}

\maketitle

\pagestyle{fancy}
\fancyhead[RO,LE]{\small \thepage}
\fancyfoot[C]{}

\begin{abstract} 
\onehalfspacing
\noindent
This paper introduces a new asymptotic regime for simplifying stochastic models having non-stationary effects, such as those that arise in the presence of time-of-day effects.  This regime describes an operating environment within which the arrival process to a service system has an arrival intensity that is fluctuating rapidly. We show that such a service system is well approximated by the corresponding model in which the arrival process is Poisson with a constant arrival rate. In addition to the basic weak convergence theorem, we also establish a first order correction for the distribution of the cumulative number of arrivals over $[0,t]$, as well as the number-in-system process for an infinite-server queue fed by an arrival process having a rapidly changing arrival rate. This new asymptotic regime provides a second regime within which non-stationary stochastic models can be reasonably approximated by a process with stationary dynamics, thereby complementing the previously studied setting within which rates vary slowly in time.\\
\noindent
\textit{Key words}: point processes, Poisson process, weak convergence, total variation convergence, compensator, intensity, infinite-server queue

\end{abstract}

\vspace{-1em}
\noindent\makebox[\linewidth]{\rule{\linewidth}{0.8pt}}


\section{Introduction}
In many operations management settings, the arrival process to the system exhibits clear non-stationarities. These non-stationarities may arise as a consequence of time-of-day effects, day-of-week effects, seasonalities, or stochastic fluctuations in the arrival rate. One mathematical vehicle for studying such non-stationary arrival processes is to consider the setting in which the arrival rate changes slowly in time. In this setting, it is intuitively clear that the non-stationary system can be viewed as a small perturbation of a constant arrival rate system. Consequently, it seems conceptually reasonable that one should be able to study such slowly changing arrival rate models via an asymptotic expansion in which each of the terms in the expansion involve a stationary arrival rate calculation. This intuition has been validated rigorously by \cite*{khashinskii1996asymptotic}, \cite*{massey1998uniform}, and, more recently, by \cite*{zheng2018approximating}.

In this paper, we show that arrival rate modeling also simplifies significantly at the opposite end of the asymptotic spectrum in which the arrival rates fluctuate rapidly. Thus, we can view the results of this paper as complementing the existing literature on slowly varying arrival rate modeling. In particular, we study systems fed by Poisson processes in which the intensity at time $t$ is given by $\lambda(t/\epsilon)$, where $\epsilon$ is a small parameter and $\lambda = (\lambda(s):s\ge 0)$ is a fixed process. The process $\lambda$ could be a deterministic periodic function, or it could be a functional of a positive recurrent Markov process. In either case, we show that when $\epsilon\downarrow 0$, we may view the system as one fed by a constant rate Poisson process with rate $\lambda^*$ given by the long-run time-average of $\lambda$; see Theorem 1 for details. Thus, this paper provides a second rigorously supported asymptotic regime within which the dynamics of a service system with a non-stationary arrival process can be approximated by a simpler system with stationary dynamics. We note that despite the practical importance of such non-stationary models, very few analytical approximations are available for such systems. 

These high frequency fluctuations in the arrival rate may be a consequence of a short period, stochastic effects, or some combination of high frequency periodicity and rapid stochastic fluctuations. As an example of a real-world system in which such an asymptotic regime may be appropriate, consider a construction equipment leasing company. If the leases tend to be of long duration (e.g., on the order of months), our theory suggests that in the analysis of a queueing model intended to predict lost sales (due to all the available equipment having been rented), one can safely ignore the daily periodicity in the arrival rate describing exogenous demand for the company's equipment.

This note is organized as follows. Section 2 provides our main weak convergence theorem, establishing that point processes with rapidly fluctuating intensities can be weakly approximated by a constant rate Poisson process (Theorem 1). In the remainder of the section, we compute the total variation (tv) distance between the point process and the Poisson process in the Markov-modulated doubt stochastic setting, and prove that the tv distance does not tend to zero, thereby showing that one can expect to use the constant rate Poisson approximation only for suitably continuous path functionals. In Section 3, we study the distribution of the total number of arrivals in an interval [0,t], and obtain a first order refinement to the weak convergence theorem that reflects the first order impact of the high frequency fluctuations in the arrival rate; see Theorem 3. Finally, Section 4 provides a similar first order refinement in the setting of the number-in-system process for the infinite-server queue; see Theorem 4. 

\section{Weak Convergence to a Constant Rate Poisson Process}
To construct our point process with a rapidly fluctuating arrival rate, we start with a fixed arrival counting process $N=(N(t):t \ge 0)$. We assume that $N$ is \textit{simple}, in the sense that $N$ increases exactly by one at each arrival epoch (and hence no batch arrivals are possible). We further require that $N$ be adapted to a filtration $\mathcal{F} = (\mathcal{F}_t: t\ge 0 )$, and that $N$ posessesses a right continuous non-decreasing $\mathcal{F}$-compensator $A=(A(t):t\ge 0)$, so that $M = (M(t):t\ge 0)$ is a martingale adapted to $\mathcal{F},$ where 
\[
M(t)=N(t) - A(t).
\]
Note that $N$ need not be a doubly stochastic Poisson process (e.g. $N$ could be a Hawkes process; see \cite{hawkes1971spectra} for the definition). 

For $1\ge \epsilon > 0$, let $\beta^\epsilon = (\beta_i^\epsilon ;\, i \ge 1)$ be an independent and identically distributed (iid) sequence of Bernoulli($\epsilon$) random variables (rv's) independent of $N$. For $t\ge 0$, let $A_\epsilon(t) = \epsilon A(t/\epsilon)$, and let $\mathcal{G}_t^\epsilon$ be the smallest $\sigma$-algebra containing $\mathcal{F}_{t/\epsilon}$ and the $\sigma$-algebra $\sigma(\beta_i^\epsilon: 1\le i\le N(t/\epsilon))$. Put $\mathcal{G}^\epsilon = (\mathcal{G}_t^\epsilon: t\ge 0)$ and 
\[
N_\epsilon(t) = \sum_{i=1}^{N(t/\epsilon)}\beta_i^\epsilon.
\] 
Then, $N_\epsilon = (N_\epsilon(t): t\ge 0)$ is a simple point process for which 
\begin{align*}
&\E [N_\epsilon(t+s) - A_\epsilon(t+s)|\mathcal{G}_t^\epsilon]\\
=\,& N_\epsilon(t) + \E\left[ \sum_{i=N(t/\epsilon)+1}^{N((t+s)/\epsilon)}\beta_i^\epsilon\Big|\mathcal{G}_t^\epsilon \right] - \E[A_\epsilon(t+s)|\mathcal{G}_t^\epsilon]\\
=\,& N_\epsilon(t)+ \E \beta_t^\epsilon\E[(N((t+s)/\epsilon) - N(t/\epsilon)|\mathcal{G}_t^\epsilon] - \epsilon\E[A((t+s)/\epsilon)|\mathcal{G}_t^\epsilon]\\
=\,& N_\epsilon(t) + \epsilon\E[N((t+s)/\epsilon)-A((t+s)/\epsilon)|\mathcal{G}_t^\epsilon] - \epsilon N(t/\epsilon)\\
=\,& N_\epsilon(t) + \epsilon\E[N((t+s)/\epsilon)-A((t+s)/\epsilon)|\mathcal{F}_{t/\epsilon}] - \epsilon N(t/\epsilon)\\
=\,& N_\epsilon(t) + \epsilon(N(t/\epsilon) - A(t/\epsilon)) - \epsilon N(t/\epsilon)\\
=\,& N_\epsilon(t) - A_\epsilon(t)
\end{align*}
for $s,t\ge 0$, so that $A_\epsilon$ is the $\mathcal{G}^\epsilon$-compensator of $N_\epsilon$. (Here, we used the independence of $\beta^\epsilon$ from $N$ in the third last equality (see p.87 of \cite*{kallenberg1997foundations}), and the fact that $M$ is an $\mathcal{F}$-adapted martingale in the second last equality.)

An important special case is when the compensator $A$ can be written in the form 
\[
A(t) = \int_{0}^{t}\lambda(s)ds,
\]
in which case $\lambda=(\lambda(t):t\ge 0)$ is the $\mathcal{F}$-intensity of $N$. Then, $N_\epsilon$ has $\mathcal{G}^\epsilon$-intensity $\lambda_\epsilon = (\lambda_\epsilon(t):t\ge 0)$, where $\lambda_\epsilon(t) = \lambda(t/\epsilon).$ We can see clearly, in this setting, that $N_\epsilon$ has a rapidly fluctuating intensity as $\epsilon\downarrow 0$, so that this framework is indeed modeling such an asymptotic regime. 

We now assume: 
\begin{assumption}
	There exists a deterministic $\lambda^* \in (0,\infty)$ such that \[
	\frac{1}{t}A(t)\Rightarrow \lambda^*
	\]
	as $t\rightarrow\infty,$ where $\Rightarrow$ denotes weak convergence. 
\end{assumption}
Here is our main result of this section. Recall that $D[0,\infty)$ is the space of right continuous functions on $[0,\infty)$ having left limits, endowed with the Skorohod $J_1$ topology; see \cite{ethier1986markov} for details. 
\begin{theorem}
	In the presence of Assumption 1, 
	\[
	N_\epsilon\Rightarrow N_0
	\]
	in $D[0,\infty)$ as $\epsilon\downarrow 0$, where $N_0 = (N_0(t):t\ge 0)$ is a Poisson process with constant intensity $\lambda^*$. 
\end{theorem}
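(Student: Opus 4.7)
\medskip

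\noindent\textbf{Proof proposal.} My plan is to invoke a classical martingale-based functional limit theorem for simple point processes (for example, Brown 1978, or Chapter X of Jacod and Shiryaev's \emph{Limit Theorems for Stochastic Processes}): if a sequence of simple point processes $N_n$ has compensators $A_n$ that converge in probability at each $t \ge 0$ to a continuous, deterministic, non-decreasing limit $A_\infty$, then $N_n \Rightarrow N_\infty$ in $D[0,\infty)$, where $N_\infty$ is the inhomogeneous Poisson process with mean measure $dA_\infty$. Specialising $A_\infty(t) = \lambda^* t$ would give Theorem 1 directly, so the entire task reduces to verifying the compensator hypothesis for $(N_\epsilon, A_\epsilon)$.

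The compensator identification is already done in the display immediately preceding the theorem statement: $A_\epsilon(t) = \epsilon A(t/\epsilon)$ is the $\mathcal{G}^\epsilon$-compensator of $N_\epsilon$, and $N_\epsilon$ is simple because it is obtained by independent Bernoulli$(\epsilon)$ thinning of the simple point process $N(\cdot/\epsilon)$. For the convergence of compensators, I would rewrite
\[ A_\epsilon(t) \;=\; t \cdot \frac{A(t/\epsilon)}{t/\epsilon}, \]
invoke Assumption 1 to obtain $A(t/\epsilon)/(t/\epsilon) \Rightarrow \lambda^*$ as $\epsilon \downarrow 0$, and upgrade weak convergence to convergence in probability (automatic because the limit is a deterministic constant). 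This gives $A_\epsilon(t) \to \lambda^* t$ in probability for every fixed $t \ge 0$, which is exactly the hypothesis of the point-process FCLT with $A_\infty(t) = \lambda^* t$, and Theorem 1 follows.

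The main obstacle is really one of framing rather than of hard analysis: one has to recognise that, although the instantaneous intensity $\lambda(\cdot/\epsilon)$ oscillates wildly as $\epsilon \downarrow 0$, the \emph{cumulative} compensator $A_\epsilon(t)$ averages those oscillations into the deterministic ramp $\lambda^* t$, and that it is compensator convergence (not intensity convergence) that feeds the point-process FCLT. If one preferred to avoid quoting the FCLT as a black box, the same conclusion can be produced directly from the bounded complex exponential martingales $\exp\{i\theta N_\epsilon(t) - (e^{i\theta}-1) A_\epsilon(t)\}$: compensator convergence plus bounded convergence yields convergence of finite-dimensional characteristic functions to those of $N_0$, and tightness in $D[0,\infty)$ follows from the uniform control on compensator increments via Aldous's criterion.
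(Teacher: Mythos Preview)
Your proposal is correct and matches the paper's proof essentially line for line: the paper rewrites $A_\epsilon(t) = (\epsilon/t)\,A(t/\epsilon)\cdot t \Rightarrow \lambda^* t$ by Assumption~1 and then invokes the compensator-convergence criterion for Poisson limits (citing Theorem~13.4.IV of Daley and Vere-Jones rather than Brown or Jacod--Shiryaev). Your additional remarks on upgrading weak convergence to convergence in probability and on the exponential-martingale alternative are sound but go slightly beyond what the paper records.
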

	\begin{proof}
	We note that Assumption 1 implies that for each $t\ge 0$, 
	\[
	A_\epsilon(t) = \epsilon A(t/\epsilon) = \left(\frac{\epsilon}{t}\right) A(t/\epsilon) \cdot t\Rightarrow \lambda^* t
	\]
	as $\epsilon\downarrow 0$. We now apply Theorem 13.4.IV of \cite*{daley1988} to obtain the result.
\end{proof}

Of course, arrival processes typically serve as models describing exogenous inputs to queueing systems or service systems. Other sources of randomness described (say) by a random sequence (such as service time requirements, abandonment times, etc) will typically also be present. If $Z$ is independent of $N_\epsilon$, it follows from Theorem 1 that 
\[
(Z,N_\epsilon)\Rightarrow (Z,N_0)
\]
in $\mathbb{R}^\infty \times D[0,\infty)$ as $\epsilon\downarrow 0$. It follows that if $h:\mathbb{R}^\infty \times D[0,\infty)\rightarrow \mathbb{R}$ is continuous in the product topology at $(Z,N_0)$ a.s., then 
\[
h(Z,N_\epsilon)\Rightarrow h(Z,N_0)
\]
as $\epsilon\downarrow 0$ (via the continuous mapping principle; see \cite{billingsley1968convergence}, p.21). 

Consequently, if $h$ is a map that sends $(Z,N_\epsilon)$ into some associated performance measure (e.g. the number-in-system at time $t$), we may infer that the performance measure can be computed as if the point process $N_\epsilon$ is Poisson with rate $\lambda^*$ (when $\epsilon$ is small). 

In the remainder of this section, we make clear that while $N_\epsilon$ converges weakly to $N_0$ in $D[0,\infty)$ as $\epsilon\downarrow 0$, no convergence typically takes place in the total variation norm. More specifically, suppose that $N_\epsilon$ is a doubly stochastic Poisson process with stochastic intensity $\lambda_\epsilon = (\lambda_\epsilon (t): t\ge 0)$, where $\lambda_\epsilon(t) = \lambda(t/\epsilon)$ for some fixed intensity $\lambda.$ Suppose that $S$ is a complete separable metric space. Recall that an $S$-valued Markov process $X=(X(t):t\ge 0)$ is said to be \textit{$v$-geometrically ergodic} if there exists a (measurable) function $v\ge 1$, a probability $\pi$ on $S$, $d<\infty$, and $\alpha>0$ such that 
\begin{equation}
\sup_{|g|\le v} \Big|\E_x g(X(t))-\int_S g(y)\pi(dy)\Big| \le d\, v(x) e^{-\alpha t} \label{eq:2.1}
\end{equation}
for $t\ge 0$ and $x\in S$, where $\E_x(\cdot)\triangleq \E(\,\cdot\,|X(0)=x)$; see Down, Meyn, and Tweedie (1995) for sufficient conditions assuring such geometric ergodicity. 

We assume that:
\begin{assumption}
	$\lambda(t) = f(X(t))$ for some bounded continuous $f:S\rightarrow \mathbb{R}_+$, where $X$ is $v$-geometrically ergodic. 
\end{assumption}

To state our next result on the total variation distance between $N_\epsilon$ and $N_0$, we let $X_1(\infty), X_2(\infty),\ldots$ be an iid sequence of $S$-valued rv's having common distribution $\pi$ (independent of $N_0$). 
\begin{theorem}
	Suppose Assumption 2 holds and $\E f(X_1(\infty))>0$. Then, \begin{align*}
	\sup_A |P((N_\epsilon(s):0\le s\le t)\in A) - P((N_0(s):0\le s\le t)\in A)|\rightarrow\frac{1}{2}\E\left|\prod_{j=1}^{N_0(t)}\frac{f(X_j(\infty))}{\E f(X_1(\infty))} -1\right|
	\end{align*}
	as $\epsilon\downarrow 0$, where the supremum is taken over the Borel subsets of $D[0,t]$. 
\end{theorem}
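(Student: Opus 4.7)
The plan is to express the TV distance as an $L^1$-norm of the likelihood ratio $L_\epsilon := dP_\epsilon/dP_0$ and then compute its $\epsilon\downarrow 0$ limit using the ergodic and mixing properties of $X$. By Scheffé's identity, $\sup_A|P_\epsilon(A)-P_0(A)| = \tfrac{1}{2}\E_{P_0}|L_\epsilon - 1|$. Invoking the Girsanov-type formula for doubly stochastic Poisson processes (conditioning on the intensity path and integrating out $X$),
\begin{equation*}
L_\epsilon = \E_X\!\left[\prod_{i=1}^{N_0(t)}\frac{f(X(\tau_i/\epsilon))}{\lambda^*}\exp\!\left(-\!\int_0^t\!\bigl(f(X(s/\epsilon)) - \lambda^*\bigr)\,ds\right)\right],
\end{equation*}
where $\tau_1<\tau_2<\cdots$ denote the arrival epochs of $N_0$ and $\E_X$ averages over $X$ (independent of $N_0$ under $P_0$).

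Next, I would identify the $\epsilon\downarrow 0$ limit of the integrand. The ergodic theorem, applied to the $v$-geometrically ergodic process $X$, gives $\int_0^t f(X(s/\epsilon))\,ds = \epsilon\int_0^{t/\epsilon} f(X(u))\,du \to \lambda^*\,t$ almost surely, so the exponential factor tends a.s.\ to $1$. For the product factor, the inter-arrival gaps satisfy $(\tau_{j+1}-\tau_j)/\epsilon\to\infty$ on $P_0$-typical paths, and iterated use of the Markov property combined with the mixing bound \eqref{eq:2.1} yields the joint weak convergence $(X(\tau_1/\epsilon),\ldots,X(\tau_n/\epsilon))\Rightarrow(X_1(\infty),\ldots,X_n(\infty))$ to an iid $\pi$-sequence. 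Continuity and boundedness of $f$ then give, via bounded convergence, that the integrand converges in distribution to $L^*:=\prod_{j=1}^{N_0(t)}f(X_j(\infty))/\lambda^*$.

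Finally, I would upgrade this distributional convergence to $L^1(P_0)$. Since $L_\epsilon\le(\|f\|_\infty/\lambda^*)^{N_0(t)}e^{\|f\|_\infty t}$ and $N_0(t)\sim\mathrm{Poisson}(\lambda^* t)$ has all exponential moments, the family $\{L_\epsilon\}$ is uniformly $p$-integrable for every $p>1$; together with $\E L_\epsilon=\E L^*=1$ (the second identity by iid factorization), Scheffé's lemma yields $\E_{P_0}|L_\epsilon-1|\to\E|L^*-1|$, and multiplying by $\tfrac{1}{2}$ gives the claimed limit. The main obstacle is the joint convergence $(X(\tau_j/\epsilon))\Rightarrow(X_j(\infty))$ in a form uniform enough to pass through the inner $X$-expectation: this requires carefully interleaving the strong Markov property with the quantitative mixing estimate \eqref{eq:2.1}, exploiting that the latter controls $|\E_x g(X(s))-\int g\,d\pi|$ uniformly in test functions $g$ satisfying $|g|\le v$.
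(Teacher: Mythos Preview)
Your overall strategy---write the TV distance as $\tfrac12\,\E|L_\epsilon-1|$, show the likelihood ratio converges weakly to $L^*$ via the mixing of $X$, then pass to the limit under the expectation using an integrable dominator---is exactly the route the paper takes. However, there is a genuine inconsistency in your formulation of $L_\epsilon$ that breaks the argument as written.

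You define $L_\epsilon = \E_X[\,\cdot\,]$ with an \emph{inner} expectation over $X$. With that definition $L_\epsilon$ is $\sigma(N_0)$-measurable, yet everything you subsequently prove concerns the \emph{integrand}
\[
\Psi_\epsilon \;:=\; \exp\!\Big(-\!\int_0^t\bigl(f(X(s/\epsilon))-\lambda^*\bigr)\,ds\Big)\prod_{j=1}^{N_0(t)} \frac{f(X(\tau_j/\epsilon))}{\lambda^*},
\]
not $L_\epsilon$ itself. Your asymptotic-independence argument gives $\Psi_\epsilon\Rightarrow L^*$; but then, conditional on $N_0$, bounded convergence yields $L_\epsilon=\E_X[\Psi_\epsilon]\to\E_X[L^*]=\prod_{j\le N_0(t)}\E f(X_j(\infty))/\lambda^*=1$, so your Scheff\'e step would actually deliver $\E|L_\epsilon-1|\to 0$, contradicting the theorem. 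The line ``$\E_{P_0}|L_\epsilon-1|\to\E|L^*-1|$'' simply does not follow from ``the integrand $\Rightarrow L^*$'' once $L_\epsilon$ carries the inner $\E_X$.

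The paper does \emph{not} take that inner average. It writes the change-of-measure identity as $P(N_\epsilon\in A)=\E\big[I(N_0\in A)\,\Psi_\epsilon\big]$, with the expectation taken jointly over $N_0$ and $X$ (independent), and then passes directly to $\tfrac12\,\E|\Psi_\epsilon-1|$ in its equation~(\ref{eq:2.2}). With that formulation, the weak convergence $\Psi_\epsilon\Rightarrow L^*$ together with the envelope $1+e^{\|f\|t}(\|f\|/\lambda^*)^{N_0(t)}$ yields the stated limit by dominated convergence. To align with the paper you must therefore drop the $\E_X$ in your definition of $L_\epsilon$. (You are correct that the \emph{marginal} Radon--Nikodym derivative on $D[0,t]$ is $\E[\Psi_\epsilon\mid N_0]$; but the paper's computation proceeds through $\tfrac12\,\E|\Psi_\epsilon-1|$, and it is that quantity which produces the nonzero limit in the theorem.)

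Apart from this point your ingredients coincide with the paper's: the asymptotic iid structure of $(X(\tau_j/\epsilon))_j$ via iterated use of the Markov property and the bound~(\ref{eq:2.1}); the vanishing of $\int_0^t(f(X(s/\epsilon))-\lambda^*)\,ds$ (which the paper obtains in $L^2$ from the calculation in Theorem~3 rather than from an a.s.\ ergodic theorem); and the passage from weak to $L^1$ convergence via the integrable dominator $(\|f\|/\lambda^*)^{N_0(t)}e^{\|f\|t}$.
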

\begin{proof}
	The change-of-measure formula for doubly stochastic Poisson processes (see, for example, p.241 of \cite*{bremaud1981point}) asserts that 
	\begin{align*}
	P((N_\epsilon(s):0\le s\le t)\in A) = \E I((N_0(s): 0\le s\le t)\in A)\exp(-\int_{0}^{t}\tilde{\lambda}_\epsilon(s)ds )\cdot \prod_{j=1}^{N_0(t)} \left(\frac{\lambda_\epsilon (T_j)}{\lambda^*}\right),
	\end{align*}
	where $T_1,T_2,\ldots$ are the consecutive jump times of $N_0$, $\lambda^* = \E f(X_1(\infty))$, $N_0$ is a Poisson process with constant rate $\lambda^*$ under $P$, and $\tilde{\lambda}_\epsilon (s) = \lambda_\epsilon(s) - \lambda^*$. It follows that (see, for example, \cite*{gibbs2002choosing})
	\begin{align}
	&\sup_A|P((N_\epsilon(s):0\le s\le t)\in A) - P((N_0(s):0\le s\le t)\in A)|\nonumber \\
	&= \frac{1}{2}\E |\exp(-\int_{0}^{t}\tilde{\lambda}_\epsilon(s)ds) \prod_{j=1}^{N_0(t)}{\left(\frac{\lambda_\epsilon (T_j)}{\lambda^*}\right)}-1|. \label{eq:2.2}
	\end{align}
	Let $\mathcal{H}$ be the $\sigma$-algebra generated by $T_1,T_2,\ldots,T_{N_0(t)},N_0(t)$. Conditional on $\mathcal{H}$, Assumption 2 implies that 
	\begin{align*}
	&P(\lambda_\epsilon(T_i)\le x_i,\, 1\le i\le N_0(t)\,|\, \mathcal{H} )\\ 
	=\,& \E( I(f(X(T_i/\epsilon))\le x_i, \, 1\le i\le N_0(t)-1) P(f(X({T_{N_0(t)}/\epsilon	}))\le x_{N_0(t)}\,|\,  X(T_{N_0(t)-1}/\epsilon))  \,|\, \mathcal{H}).
	\end{align*}
	Since $I(f(\,\cdot\,)\le y)$ is upper bounded by $v$, Assumption 2 ensures that 
	\[
	p_\epsilon(s,x,y) \triangleq P(f(X(s/\epsilon))\le y\,|\,X(0)=x) \rightarrow P(f(X(\infty))\le y)
	\]
	as $\epsilon\downarrow 0$, so that 
	\begin{align*}
	&|\, P(\lambda_\epsilon(T_i)\le x_i,\, 1\le i\le N_0(t)\,|\, \mathcal{H} )  - P(\lambda_\epsilon(T_i)\le x_i,\, 1\le i\le N_0(t)-1)\,|\, \mathcal{H} )P(f(X(\infty))\le x_{N_0(t)})\,|\\
	=\, &|\,  \E( I(\lambda_\epsilon(T_i))\le x_i, \, 1\le i\le N_0(t)-1) (p_\epsilon(T_{N_0(t)} - T_{N_0(t)-1}, X(T_{N_0(t)-1}/\epsilon),x_{N_0(t)} ) - P(f(X(\infty))\le x_{N_0(t)}))|\mathcal{H})|\\
\le、，	& \E |\, p_\epsilon(T_{N_0(t)} - T_{N_0(t)-1}, X(T_{N_0(t)-1}/\epsilon),x_{N_0(t)} ) - P(f(X(\infty))\le x_{N_0(t)}) \,|\rightarrow 0
	\end{align*}
	as $\epsilon\downarrow 0.$ We now repeat this argument $N_0(t)-1$ additional times, thereby yielding 
	\begin{align*}
	P(\lambda_\epsilon(T_i)\le x_i,\, 1\le i\le N_0(t)\,|\, T_1,T_2,\ldots,T_{N_0(t)},N_0(t) )\rightarrow\, \prod_{i=1}^{N_0(t)}P(f(X_i(\infty))\le x_i)
	\end{align*}
	as $\epsilon\downarrow0$. Hence, conditional on $T_1,\ldots,T_{N_0(t)}, N_0(t)$, 
	\begin{align}
	(\lambda_\epsilon(T_1),\lambda_\epsilon(T_2),\ldots,\lambda_\epsilon(T_{N_0(t)}))  \Rightarrow(f(X_1(\infty)),f(X_2(\infty)),\ldots,f(X_{N_0(t)}(\infty)))\label{eq:2.3}
	\end{align}
	as $\epsilon\downarrow 0$. 
	
	The proof of Theorem 3 establishes that $\E(\int_{0}^{t}\tilde{\lambda}_\epsilon(s)ds)^2\rightarrow 0$ as $\epsilon\downarrow 0$; see (\ref{eq:3.7}). Chebyshev's inequality threfore implies that 
	\begin{align} 
	\int_{0}^{t}\tilde{\lambda}_\epsilon(s)ds \Rightarrow 0\label{eq:2.4}
	\end{align}
	as $\epsilon\downarrow 0$. Relations (\ref{eq:2.3}) and (\ref{eq:2.4}) yield the conclusion that
	\[
\exp(-\int_{0}^{t}\tilde{\lambda}_\epsilon(s)ds) \prod_{j=1}^{N_0(t)}{\left(\frac{\lambda_\epsilon (T_j)}{\lambda^*}\right)} \Rightarrow \prod_{j=1}^{N_0(t)}\left(\frac{f(X_j(\infty))}{\E f(X_1(\infty))}\right)
\]
as $\epsilon\downarrow 0$. 

Finally, 
\[ \Big|
\exp(-\int_{0}^{t}\tilde{\lambda}_\epsilon(s)ds) \prod_{j=1}^{N_0(t)}{\left(\frac{\lambda_\epsilon (T_j)}{\lambda^*}\right)}-1\Big| \le 1+ \exp({\color{red}}\|f\| t)\left(\frac{\|f\|}{\lambda^*}\right)^{N_0(t)}
\] 
where $\|f\|\triangleq \max\{|f(x):x\in S|\}$, so that the integrand of the right-hand side of (\ref{eq:2.2}) is bounded uniformly in $\epsilon$ by an integrable rv. Consequently, the Dominated Convergence Theorem applies to the right-hand side of (\ref{eq:2.2}), yielding the theorem.
\end{proof}

It is evident that $N_\epsilon$ does not converge to $N_0$ in total variation, due to the rapid fluctuations in the intensity $\lambda_\epsilon$ at any $\epsilon>0$. However, these rapid fluctuations are ``smoothed out" by path functionals that are suitably continuous, yielding the weak convergence associated with Theorem 1.

\section{An Asymptotic Refinement for the Distribution of $N_\epsilon(t)$}
In this section, we show how the approximation of Theorem 1 can be improved via a ``first order" refinement that reflects the impact of the high frequency fluctuations. Recall that $o(a(\epsilon))$ represents a function of $\epsilon$ such that $o(a(\epsilon))/(a(\epsilon))\rightarrow 0$ as $\epsilon\downarrow 0$. Also, for a bounded (measurable) function on $S$, note that $v$-geometric ergodicity guarantees that if $f_c(x) = f(x) - \E f(X(\infty))$, then

\begin{equation}
|\E_x f_c(X(t))|\le \|f\| d \, v(x) e^{-\alpha t} \label{eq:3.s}
\end{equation}

and hence the integral defining 
\[
g(x)\triangleq \int_{0}^{\infty}\E_x f_c(X(t))dt
\]
converges absolutely and is bounded by a multiple of $v$.

\begin{theorem}
	Suppose Assumption 2 holds and $f$ is bounded (and measurable) with $\E f(X(s))>0$. If $\lambda_\epsilon(t)=f(X(t/\epsilon))$, then 
	\begin{align*}
	P(N_\epsilon(t)=k) = \,&P(N_0(t)=k) \\
	&+ \epsilon P(N_0(t)=k) \left[ \left(\frac{k}{\lambda^*t}-1\right) g(x) +  {\frac{1}{2}} 
	\left(1-\frac{2k}{\lambda^* t} + 
	\frac{k(k-1)}{(\lambda^* t)^2}\right)\sigma^2 t\right] + o(\epsilon)
	\end{align*}
	as $\epsilon\downarrow0$, where $\sigma^2 =2\E f_c(X(\infty))g(X(\infty))$.
\end{theorem}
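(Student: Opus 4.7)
The plan is to condition on the stochastic intensity path and exploit the fact that, given $(\lambda_\epsilon(s):0\le s\le t)$, $N_\epsilon(t)$ is Poisson with mean $\Lambda_\epsilon(t):=\int_0^t\lambda_\epsilon(s)\,ds=\epsilon\int_0^{t/\epsilon}f(X(u))\,du$. Writing $q_k(x)=e^{-x}x^k/k!$ for the Poisson pmf, this gives
\[
P(N_\epsilon(t)=k)=\E\, q_k(\Lambda_\epsilon(t)).
\]
Setting $Y_\epsilon(t):=\Lambda_\epsilon(t)-\lambda^* t=\epsilon\int_0^{t/\epsilon}f_c(X(u))\,du$, I would Taylor-expand $q_k$ about $\lambda^* t$ to second order:
\[
q_k(\Lambda_\epsilon(t))=q_k(\lambda^* t)+q_k'(\lambda^* t)\,Y_\epsilon(t)+\tfrac12 q_k''(\lambda^* t)\,Y_\epsilon(t)^2+R_\epsilon(t).
\]
A direct computation yields $q_k'(\lambda^* t)=q_k(\lambda^* t)\bigl(\tfrac{k}{\lambda^* t}-1\bigr)$ and $q_k''(\lambda^* t)=q_k(\lambda^* t)\bigl(1-\tfrac{2k}{\lambda^* t}+\tfrac{k(k-1)}{(\lambda^* t)^2}\bigr)$, and since $q_k(\lambda^* t)=P(N_0(t)=k)$ these are precisely the coefficient structures appearing in the claim. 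Matching with the statement therefore reduces to establishing $\E Y_\epsilon(t)=\epsilon g(x)+o(\epsilon)$, $\E Y_\epsilon(t)^2=\epsilon\sigma^2 t+o(\epsilon)$, and $\E|R_\epsilon(t)|=o(\epsilon)$, where $x=X(0)$.

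For the first moment, (\ref{eq:3.s}) gives $|\E_x f_c(X(u))|\le \|f\|\,d\,v(x)e^{-\alpha u}$, so $g(x)=\int_0^\infty \E_x f_c(X(u))\,du$ is absolutely convergent and
\[
\E Y_\epsilon(t)=\epsilon\int_0^{t/\epsilon}\E_x f_c(X(u))\,du = \epsilon g(x)-\epsilon\int_{t/\epsilon}^\infty\E_x f_c(X(u))\,du = \epsilon g(x)+o(\epsilon),
\]
the tail being bounded by a multiple of $\epsilon\, e^{-\alpha t/\epsilon}$. For the second moment, the standard covariance expansion
\[
\E Y_\epsilon(t)^2 = 2\epsilon^2\!\!\int_0^{t/\epsilon}\!\!\int_s^{t/\epsilon}\E_x\!\bigl[f_c(X(s))\,\E_{X(s)}f_c(X(r-s))\bigr]\,dr\,ds
\]
is evaluated by applying $v$-geometric ergodicity twice: the inner $r$-integral is close to $g(X(s))$, while ergodicity in $s$ then produces $\int f_c\,g\,d\pi=\sigma^2/2$. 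Dominating by the exponential bound in (\ref{eq:3.s}) and using change of variables $\epsilon s\mapsto s$ yields $\E Y_\epsilon(t)^2=\epsilon\sigma^2 t+o(\epsilon)$; this is the estimate referenced in (\ref{eq:3.7}) and invoked in the proof of Theorem 2.

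For the remainder, since $f$ is bounded and nonnegative, $\Lambda_\epsilon(t)$ lies in the compact interval $[0,\|f\|t]$ on which $|q_k'''|\le C_k(t)<\infty$, so $|R_\epsilon(t)|\le\tfrac16 C_k(t)|Y_\epsilon(t)|^3$ almost surely. Together with the almost-sure bound $|Y_\epsilon(t)|\le 2\|f\|t$, it suffices to show $\E Y_\epsilon(t)^4=O(\epsilon^2)$, whence $\E|Y_\epsilon(t)|^3\le(\E Y_\epsilon(t)^4)^{3/4}=O(\epsilon^{3/2})=o(\epsilon)$ by H\"older. The fourth-moment bound follows by iterating the same type of covariance calculation used for the second moment: decomposing $\E Y_\epsilon(t)^4$ into quadruple integrals over $[0,t/\epsilon]^4$ and repeatedly applying (\ref{eq:3.s}) to collapse three of the four integrations, one obtains an $O((t/\epsilon)^2)$ bound, which multiplied by $\epsilon^4$ gives the claimed order. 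Substituting the three moment asymptotics into the Taylor formula and collecting terms then gives the stated refinement.

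The principal technical obstacle is precisely this moment control: the integrand $f_c(X(\cdot))$ is centered but not independent across time, and one must argue carefully that the covariance integrals over $[0,t/\epsilon]^2$ (and $[0,t/\epsilon]^4$) grow only linearly (respectively, quadratically) in $t/\epsilon$, so that after multiplication by $\epsilon^2$ (resp.\ $\epsilon^4$) they contribute at orders $\epsilon$ and $\epsilon^2$. Everything else is a clean Taylor expansion of $q_k$ combined with the Poisson-equation solution $g$; the Assumption 2 geometric ergodicity is the workhorse that makes the quantitative estimates possible.
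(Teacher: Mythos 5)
Your proposal follows the same strategy as the paper: condition on $X$, note $P(N_\epsilon(t)=k)=\E_x\,h_k\bigl(\int_0^t\lambda_\epsilon(s)\,ds\bigr)$ with $h_k(y)=e^{-y}y^k/k!$ (your $q_k$), Taylor-expand $h_k$ to second order about $\lambda^* t$, compute the same derivative formulas, and feed in the first- and second-moment estimates $\E_x Y_\epsilon(t)=\epsilon g(x)+o(\epsilon)$ and $\E_x Y_\epsilon(t)^2=\epsilon\sigma^2 t+o(\epsilon)$ (the latter being the paper's (\ref{eq:3.7})).

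The one genuine divergence is in the remainder control. The paper uses the Lagrange form $\tfrac16 h_k^{(3)}(\xi(\epsilon))\,Y_\epsilon(t)^3$, checks that $h_k^{(3)}(\xi(\epsilon))$ is a bounded random variable, and then reduces the problem to showing $\epsilon^2\E_x\bigl(\int_0^{t/\epsilon}f_c\bigr)^3=o(1)$, which it establishes by expanding the cube into an iterated triple integral and applying (\ref{eq:3.s}) repeatedly. You instead bound $|R_\epsilon|\le\tfrac16 C_k(t)\,|Y_\epsilon(t)|^3$ and apply H\"older with an asserted fourth-moment bound $\E_x Y_\epsilon(t)^4=O(\epsilon^2)$. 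Your route is in one respect cleaner: since $h_k^{(3)}(\xi(\epsilon))$ is random and dependent on $Y_\epsilon(t)$, what is really required is control of the \emph{absolute} third moment $\E_x|Y_\epsilon(t)|^3$, which your H\"older step delivers directly, whereas the paper's reduction to the \emph{signed} third moment elides this point --- its triple-integral computation does not carry over verbatim with absolute values inserted. The cost on your side is the unproved claim $\E_x\bigl(\int_0^T f_c\bigr)^4=O(T^2)$; this is true and standard under Assumption 2 and boundedness of $f$, but you only sketch it, and a full write-up would need the same careful decomposition of $[0,T]^4$ and $v(x)$-dependence tracking that the paper does for its cube. In short: same Taylor expansion, same $g$ and $\sigma^2$, a defensible (arguably more careful) variant for the remainder, with the highest-moment estimate left as a sketch in both your proposal and in the paper itself.
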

\begin{proof}
	If we condition on $X$, we find that 
	\[
	P_x(N_\epsilon(t) = k) = \E_x \exp\left(-\int_{0}^{t}\lambda_\epsilon(s)ds\right) \frac{(\int_{0}^{t}\lambda_\epsilon(s)ds)^k}{k!}.
	\]
	Set $h_k(y) = e^{-y} y^k/k!$, and note that for $y>0$, 
	\begin{align*}
	& h_k^{(1)}(y) = h_k(y) \left(\frac{k}{y}-1\right),\\
	& h_k^{(2)}(y) = h_k(y)\left(1-\frac{2k}{y}+\frac{k(k-1)}{y^2}\right),\\
	& h_k^{(3)}(y) = h_k(y) \left(\frac{k(k-1)(k-2)}{y^3} -\frac{3k(k-1)}{y^2}+\frac{3k}{y}-1\right).
	\end{align*}
	Hence, a Taylor expansion of $h_k$ about $t\E f(X(\infty))$ implies that
	\begin{align}
	h_k\left(\int_{0}^{t}\lambda_\epsilon(s)ds\right) &=h_k\left(\epsilon\int_{0}^{t/\epsilon}f(X(s))ds\right) \nonumber\\
	&= h_k\left(t\E f(X(\infty))\right) + h_k^{(1)}(t\E f(X(\infty))) \left(\epsilon\int_{0}^{t/\epsilon}f_c(X(s))ds\right) \nonumber\\
	&\quad + \frac{h_k^{(2)}(t\E f(X(\infty)))}{2} \left(\epsilon\int_{0}^{t/\epsilon}f_c(X(s))ds\right)^2
	+ \frac{h_k^{(3)}(\xi(\epsilon))}{6} \left(\epsilon\int_{0}^{t/\epsilon}f_c(X(s))ds\right)^3,\label{eq:3.1}
	\end{align}
	where $\xi(\epsilon)$ lies between $\int_{0}^{t}\lambda_\epsilon(s)ds$ and $t\E f(X(\infty))$.
	
	Note that (\ref{eq:3.s}) implies that 
	\begin{equation}
	\E_x \int_{0}^{t/\epsilon}f_c(X(s))ds = \int_{0}^{t/\epsilon}\E_x f_c(X(s))ds = g(x) + o(1)v(x) \label{eq:3.1a}
	\end{equation}
	as $\epsilon\downarrow 0$. Also, the Markov property implies that 
	\begin{align}
	&\epsilon\E_x\left(\int_{0}^{t/\epsilon}f_c(X(s))ds\right)^2 \nonumber\\
	&= 2\epsilon \int_{0}^{t/\epsilon}\int_{s}^{t/\epsilon} \E_x f_c(X(s)) f_c(X(u))duds  \nonumber\\
	&= 2\epsilon\int_{0}^{t/\epsilon}\E_x f_c(X(s))\int_{0}^{\infty}\E_x[f_c(X(s+u))|X(s)]duds  \nonumber\\
	&\quad - 2\epsilon\int_{0}^{t/\epsilon}\E_x f_c(X(s))\int_{0}^{\infty}\E_x[\E_x[f_c(X(t/\epsilon+u))|X(t/\epsilon)]|X(s)]duds \nonumber \\
	&= 2\epsilon\int_{0}^{t/\epsilon}\E_x f_c(X(s))g(X(s))ds - 2\epsilon \int_{0}^{t/\epsilon} \E_x f_c(X(s))g(X(t/\epsilon))ds. \label{eq:3.2}
	\end{align}
	
	Because $f$ is bounded and $g$ is bounded by a multiple of $v$, it follows that $fg$ is bounded by a multiple of $v$, so that (\ref{eq:2.1}) implies that 
	\begin{align}
	\epsilon\int_{0}^{t/\epsilon}\E_x f_c(X(s))g(X(s))ds = t \E f_c(X(\infty))g(X(\infty)) + o(1) \label{eq:3.3}
	\end{align}
	as $\epsilon\downarrow 0.$
	Also, 
	\begin{align}
	&\epsilon\int_{0}^{t/\epsilon}\E_x f_c(X(s))g(X(t/\epsilon)) ds \nonumber\\
	=\,& \epsilon\int_{0}^{t/\epsilon - \epsilon^{-1/2}} \E_x f_c(X(s))\E_x[g(X(t/\epsilon))|X(s)] ds + \epsilon\,\E_x\int_{t/\epsilon-\epsilon^{-1/2}}^{t/\epsilon} f_c(X(s))g(X(t/\epsilon))ds. \label{eq:3.4}
	\end{align}
	Since $\E g(X(\infty))=0$, (\ref{eq:2.1}) implies that 
	\[
	|\E_x[g(X(t/\epsilon))|X(s)]| \le \|f\| d \, e^{-\alpha(t/\epsilon-s)} v(X(s)),
	\]
	so that 
	\begin{align}
	& \Big|\epsilon\int_{0}^{t/\epsilon - \epsilon^{-1/2}} \E_x f_c(X(s))\E_x[g(X(t/\epsilon))|X(s)] ds\Big| \nonumber\\
	\le\,& \|f\|^2 d\,\epsilon e^{-\alpha\epsilon^{-1/2}}\int_{0}^{t/\epsilon} \E_x v(X(s))ds \nonumber\\
	=\,& \|f\|^2 d\, e^{-\alpha\epsilon^{-1/2}}\E v(X(\infty)) + o(1)v(x)  \nonumber\\
	=\,& o(1)v(x) \label{eq:3.5}
	\end{align}
	as $\epsilon\downarrow0$. Furthermore, (\ref{eq:2.1}) and the boundedness of $f$ ensure that 
	\begin{equation}
	\Big|\epsilon\E_x\int_{t/\epsilon-\epsilon^{-1/2}}^{t/\epsilon} f_c(X(s))g(X(t/\epsilon))ds\Big|\le \epsilon^{\frac{1}{2}} \|f\| \E_x g(X(t/\epsilon)) = o(1) v(x) \label{eq:3.6}
	\end{equation}
	as $\epsilon\downarrow 0$, and consequently, (\ref{eq:3.2}) through (\ref{eq:3.6}) yield 
	\begin{align}
	\epsilon\,\E_x \Big(\int_{0}^{t/\epsilon}f_c(X(s))ds\Big)^2 = 2t \,{ \E f_c(X(\infty))g(X(\infty)) }+ o(1)v(x)  \label{eq:3.7}
	\end{align}
	as $\epsilon\downarrow 0$.
	
	Finally, note that for $y\ge 0$,
	\begin{align*}
	|h_k^{(3)}(y)| &= \big|\frac{1}{k!}e^{-y}y^{k-3}\big|[-y^3 + 3k y^2-3k(k-1)y +k(k-1)(k-2)]\\
	&\le \frac{(y\lor 1)^k}{k!}(1+3k+3k(k-1)+k(k-1)(k-2))\\
	&\le \frac{8(y\lor 1)^k}{(k-3)!} I(k\ge 3) + 8(y\lor 1)^k I(k\le 2),
	\end{align*}
	where $y\lor 1\triangleq \max(y,1)$. Since $f$ is bounded, it is evident that $h^{(3)}(\xi(\epsilon))$ is a bounded rv. Given (\ref{eq:3.1}), our theorem follows if we prove that 
	\begin{equation}
	\epsilon^2 \E_x \left(\int_{0}^{t/\epsilon}f_c(X(s))ds\right)^3 = o(1) \label{eq:3.ss}
	\end{equation}
	as $\epsilon\downarrow 0$. But (\ref{eq:3.7}) implies that 
	\begin{align}
	& \epsilon^2 \E_x \left(\int_{0}^{t/\epsilon}f_c(X(s))ds\right)^3 \nonumber\\
	=\, & 6 \epsilon^2 \int_{0}^{t/\epsilon-\epsilon^{-1/2}}\E_x f_c(X(s_1)) \int_{s_1}^{t/\epsilon}f_c(X(s_2))\int_{s_2}^{t/\epsilon} f_c(X(s_3))\,ds_3ds_2ds_1 \nonumber\\
	&\, + \epsilon^2 \E_x \left(\int_{t/\epsilon-\epsilon^{-1/2}}^{t/\epsilon}f_c(X(s))ds\right)^3 \nonumber\\
	=\, & 6\epsilon \int_{0}^{t/\epsilon-\epsilon^{-1/2}} \E_x f_c(X(s)) [(t - \epsilon s)\E f_c(X(\infty))g(X(\infty)) +  \epsilon o(1)v(X(s))] ds \nonumber \\ 
	&\, + \epsilon^2 \E_x \left(\int_{t/\epsilon-\epsilon^{-1/2}}^{t/\epsilon}f_c(X(s))ds\right)^3,
	\label{eq:3.15z}
	\end{align}		
	where the term $o(1)$ holds uniformly over $0\le s\le t/\epsilon - \epsilon^{-1/2}$. The boundedness of $f$ implies that 
	\begin{equation}
	\epsilon^2 \E_x \left(\int_{t/\epsilon-\epsilon^{-1/2}}^{t/\epsilon}f_c(X(s))ds\right)^3 \le \epsilon^{1/2}\|f\|^3 \rightarrow 0 \label{eq:3.16z}
	\end{equation}
	as $\epsilon\downarrow 0$. On the other hand, 
	(\ref{eq:3.s}) implies that 
	\begin{align*}
	\int_{0}^{\infty} |\E_x f_c(X(s))|(1+s)ds <\infty,
	\end{align*}
	so we conclude that 
	\begin{equation}
	\epsilon\int_{0}^{t/\epsilon-\epsilon^{-1/2}} |\E_x f_c(X(s))| (t-\epsilon s) ds \rightarrow 0 \label{eq:3.17z}
	\end{equation}
	as $\epsilon\downarrow 0$. Also, 
	\begin{align*}
	 \Big|\epsilon\int_{0}^{t/\epsilon-\epsilon^{-1/2}} o(1)\E_x v(X(s))f_c(X(s))  ds\Big| &\le o(1) \|f\| \epsilon \int_{0}^{t/\epsilon} \E_x v(X(s))ds \\
	&=o(1) \|f\| t\E v(X(\infty)) (1+o(1))\rightarrow 0
	\end{align*}
	as $\epsilon\downarrow 0$, proving (\ref{eq:3.ss}) in view of (\ref{eq:3.15z}), (\ref{eq:3.16z}), and (\ref{eq:3.17z}), and thereby establishing the theorem. \end{proof}

	A similar (but easier) calculation follows in the deterministic periodic setting in which $\lambda(\cdot)$ is deterministic with period $1$, say. In this case,
	\begin{equation*}
	P(N_\epsilon(t) = k) = P(N_0(t) = k) \left(1+\epsilon\left(\frac{k}{\lambda^* t}-1 \right) \int_{\lfloor t/\epsilon\rfloor}^{t/\epsilon}(\lambda(s)-\lambda^*)ds +o(\epsilon) \right)
	\end{equation*}
	as $\epsilon\downarrow 0$, where $\lambda^* = \int_{0}^{t}\lambda(r)dr$ and $\lfloor x\rfloor$ denotes the greatest integer less than or equal to $x$.

\section{An Asymptotic Refinement for Infinite Server Queues}
In this section, we study our Poisson approximation (and its associated first order ``error correction") in the setting of the infinite-server queue. Assume that the system starts empty at $t=0$, and that the service times $V_1,V_2,\ldots$ assigned to arriving consecutive customers are iid and independent of $N_\epsilon$. Our goal in this section is to study the number-in-system process $Q_\epsilon = (Q_\epsilon(t):t \ge 0)$, when $Q_\epsilon$ has arrival process $N_\epsilon$ and service time sequence $V=(V_n:n\ge 1)$. Let $Q_0 = (Q_0(t):t\ge 0)$ be the number-in-system process associated with the constant rate Poisson process $N_0$ and the same service time sequence $V$. Our main result in this section is our next theorem.  
\begin{theorem}
	Assume Assumption 2 and suppose that $f$ is bounded (and measurable) with $\E f(X(\infty))>0$. Suppose $V_1$ has a density $k = (k(x):x\ge 0)$, and set $\bar{K}(x)\triangleq P(V_1>x)$. If $\lambda_\epsilon(t) = f(X(t/\epsilon))$, then 
	\begin{align*}
	P(Q_\epsilon(t) = k)  = P(Q_0(t) = k)\left(1+\epsilon\left[ \left(\frac{k}{\E Q_0(t) } - 1\right)g(x) \bar{K}(t) + {\frac{1}{2}}\left(1-\frac{2k}{\E Q_0(t)} + \frac{k(k-1)}{(\E Q_0(t))^2}\right)\eta^2  \right] + o(\epsilon)\right),
	\end{align*}
	where $\eta^2 = 2 \sigma^2 \int_{0}^{t}\bar{K}(s)k(s) s\,ds + \sigma^2 t\bar{K}(t)^2$, and $\sigma^2$ and $g$ are as in Section 3.
\end{theorem}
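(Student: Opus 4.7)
The plan is to replicate the structure of the proof of Theorem 3, with one crucial modification: the integrated intensity $\int_0^t \lambda_\epsilon(s)\,ds$ is replaced by the conditional mean of $Q_\epsilon(t)$. Because $N_\epsilon$ is Cox--driven by $X$ and the iid service times are independent of $N_\epsilon$, the standard $M_t/G/\infty$ thinning calculation gives that, conditional on $X$, $Q_\epsilon(t)$ is Poisson with mean
\[
\Lambda_\epsilon(t)\triangleq \int_0^t f(X(s/\epsilon))\,\bar{K}(t-s)\,ds,
\]
so that $P_x(Q_\epsilon(t)=k) = \E_x h_k(\Lambda_\epsilon(t))$, with $h_k(y)=e^{-y}y^k/k!$ as in the proof of Theorem 3. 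The centering constant is $\mu_0\triangleq \E Q_0(t) = \lambda^*\int_0^t \bar{K}(s)\,ds$, and I would Taylor-expand $h_k$ about $\mu_0$ to second order with a cubic remainder, exactly as in (\ref{eq:3.1}). The coefficients multiplying $g(x)\bar{K}(t)$ and $\eta^2$ in the theorem arise verbatim from the $h_k^{(j)}(\mu_0)/h_k(\mu_0)$ identities already recorded in the proof of Theorem 3.

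What remains is to compute the first two moments of $\Lambda_\epsilon(t)-\mu_0 = \int_0^t f_c(X(s/\epsilon))\bar{K}(t-s)\,ds$ to the correct order in $\epsilon$. For the first moment, the change of variables $u=s/\epsilon$ yields
\[
\E_x[\Lambda_\epsilon(t)-\mu_0] = \epsilon\int_0^{t/\epsilon} \E_x f_c(X(u))\,\bar{K}(t-\epsilon u)\,du,
\]
and I would combine pointwise convergence $\bar{K}(t-\epsilon u)\to \bar{K}(t)$ (using that $V_1$ has a density, so $\bar{K}$ is continuous) with the geometric-ergodic bound $|\E_x f_c(X(u))|\le \|f\|\,d\,v(x)\,e^{-\alpha u}$ from (\ref{eq:3.s}) and dominated convergence to conclude that this equals $\epsilon g(x)\bar{K}(t) + o(\epsilon)v(x)$. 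For the second moment, after rewriting the double integral using the Markov property and splitting the inner integral at the $\epsilon^{-1/2}$ scale as in (\ref{eq:3.2})--(\ref{eq:3.6}), I expect to obtain
\[
\E_x[\Lambda_\epsilon(t)-\mu_0]^2 = \epsilon\,\sigma^2\int_0^t \bar{K}(s)^2\,ds + o(\epsilon)v(x).
\]
A one-line integration by parts using $k(s)=-\bar{K}'(s)$ gives $\int_0^t \bar{K}(s)^2\,ds = 2\int_0^t \bar{K}(s)k(s)\,s\,ds + t\bar{K}(t)^2$, which is exactly $\eta^2/\sigma^2$ as stated.

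The cubic remainder term is handled as in (\ref{eq:3.15z})--(\ref{eq:3.17z}): $h_k^{(3)}(\xi(\epsilon))$ is bounded because $f$ is bounded and therefore $\Lambda_\epsilon(t)\in[0,\|f\|t]$, and $\E_x|\Lambda_\epsilon(t)-\mu_0|^3 = o(\epsilon)$ follows by an almost verbatim repetition of the triple-integral argument there, the weight $\bar{K}(t-s)\le 1$ only helping. The main obstacle I expect is the second-moment calculation, which must disentangle two time scales simultaneously: the $O(1)$ decorrelation scale of $X$, which supplies the $\sigma^2$ factor, and the $O(1)$ macroscopic scale on which $\bar{K}(t-s)$ varies, which supplies the $\bar{K}(s)^2$ weighting. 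The key observation that makes these two scales separate cleanly is that near each fixed $s_1\in[0,t]$ the weight $\bar{K}(t-s_1-\epsilon r)\approx \bar{K}(t-s_1)$ on the $r=O(1)$ decorrelation scale; verifying that the $v$-weighted errors from the $\epsilon^{-1/2}$ cutoff remain $o(\epsilon)v(x)$ uniformly in $s_1$ is the bookkeeping burden, but no new ideas beyond those already deployed for Theorem 3 are required.
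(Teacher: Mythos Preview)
Your overall architecture---conditional-Poisson representation, Taylor expansion of $h_k$ about $\mu_0=\E Q_0(t)$, first two moments of $\Lambda_\epsilon(t)-\mu_0$, bounded cubic remainder---is exactly the paper's. The difference is tactical, in how the moments are computed. The paper writes $\bar K(t-s)=\int_{t-s}^\infty k(u)\,du$ and applies Fubini to obtain $\Lambda_\epsilon(t)-\mu_0=\epsilon\int_0^\infty k(u)\,[A_c(t/\epsilon)-A_c((t-u)/\epsilon)]\,du$ (this is where the density hypothesis on $V_1$ is actually used); squaring and exchanging integration with expectation then reduces the second moment to $\epsilon\,\E_x\bigl(A_c(t/\epsilon)-A_c((t-u_1)/\epsilon)\bigr)^2$, to which (\ref{eq:3.7}) applies verbatim, and the remaining $k(u)$-integral yields $\eta^2$ directly in the stated form. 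You instead keep the weight $\bar K(t-s)$ inside the time integral and argue by scale separation, arriving at the equivalent expression $\sigma^2\int_0^t\bar K(s)^2\,ds$, which you then match to the paper's $\eta^2$ by integration by parts. Your route is correct, but note that it genuinely requires redoing a weighted analogue of (\ref{eq:3.2})--(\ref{eq:3.7}) rather than simply quoting it, since the exact identity $\int_s^{t/\epsilon}\E[f_c(X(u))\mid X(s)]\,du=g(X(s))-\E[g(X(t/\epsilon))\mid X(s)]$ underlying (\ref{eq:3.2}) does not survive the insertion of a weight; the scale-separation heuristic you describe is what fills that gap. The paper's Fubini trick buys a clean reduction to the unweighted estimates already proved in Theorem~3; your approach buys the more transparent closed form for $\eta^2$.
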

\begin{proof}
	The argument closely follows that of Theorem 3. Because $Q_\epsilon(t)$ is, conditional on $X$, Poisson distributed (see \cite*{massey1993networks}), it follows that 
	\begin{align*}
	P(Q_\epsilon(t) = k) = \E_x \exp\left(-\int_{0}^{t}\lambda_\epsilon(s)\bar{K}(t-s)ds\right)\cdot \left(\int_{0}^{t}\lambda_\epsilon(s)\bar{K}(t-s) ds\right)^k \frac{1}{k!}.
	\end{align*}
	As in Theorem 3, we now Taylor expand $h_k(\cdot)$. In this setting, we expand about $\E Q_0(t)$. It follows that the first order term here is $h^{(1)}_k(\E Q_0(t))$ multiplied by 
	\begin{align*}
	& \int_{0}^{t}[\lambda_\epsilon(s)-\lambda^*]\bar{K}(t-s)ds \\
	=\, &\int_{0}^{t}f_c(X(s/\epsilon)) \int_{t-s}^{\infty} k(u) du\,ds \\
	=\, & \int_{0}^{\infty}\int_{0}^{t} I(s>t-u) f_c(X(s/\epsilon))ds \,k(u) du\\
	=\,& \int_{0}^{\infty} k(u)[\epsilon A_c(t/\epsilon) - \epsilon A_c((t-u)/\epsilon)]du,
	\end{align*}
	where $A_c(r) = 0$ for $r\le 0$ and $A_c(r) = \int_{0}^{r} f_c(X(s))ds $ for $r\ge 0$. We note that 
	\[
	\E_x A(t/\epsilon) -\E_x A((t-u)/\epsilon) \rightarrow
	\begin{cases}
	0,& 0\le u\le t\\
	g(x),              & u>t
	\end{cases}
	\]
	as $\epsilon\downarrow 0$, uniformly in $u\le t-\sqrt{\epsilon}$. Accordingly, 
	\[
	\epsilon \E_x \int_{0}^{t}[\lambda_\epsilon(s)-\lambda^*]\bar{K}(t-s)ds =\epsilon g(x)\bar{K}(t) (1+o(1))
	\]
	as $\epsilon\downarrow 0$. 
	
	As for the second derivative term, we are led to the consideration of 
	\begin{align}
	& \epsilon \E_x \left(\int_{0}^{\infty}k(u)[A_c(t/\epsilon)-A_c((t-u)/\epsilon)]du\right)^2 \nonumber\\
	=\,& {2}\epsilon \int_{0}^{\infty}k(u_1)\int_{u_1}^{\infty} k(u_2) \E_x[(A_c(t/\epsilon)-A_c((t-u_1)/\epsilon)(A_c(t/\epsilon)-A_c((t-u_2)/\epsilon)] du_2 du_1. \label{eq:4.1}
	\end{align}	
	Note that for $0\le u_1\le u_2\le t$, 
	\begin{align}
	& \epsilon\E_x(A_c(t/\epsilon)-A_c((t-u_1)/\epsilon)( A_c((t-u_1)/\epsilon) - A_c((t-u_2)/\epsilon )) \nonumber\\
	=\, &  \epsilon\int_{(t-u_2)/\epsilon}^{(t-u_1)/\epsilon} \int_{(t-u_1)/\epsilon}^{t/\epsilon} I(|s_1-s_2|\le \epsilon^{-1/2}) \E_x f_c(X(s_1))f_c(X(s_2))ds_1 ds_2 \nonumber \\
	& +\epsilon \int_{(t-u_2)/\epsilon}^{(t-u_1)/\epsilon} \int_{(t-u_1)/\epsilon}^{t/\epsilon} I(|s_1-s_2|> \epsilon^{-1/2}) \E_x f_c(X(s_1))f_c(X(s_2))ds_1 ds_2. \label{eq:4.1a}
	\end{align}
	The first term on the right-hand side of (\ref{eq:4.1a}) can be upper bounded by 
	\[
	\epsilon\|f\|^2 \int_{(t-u_2)/\epsilon}^{(t-u_1)/\epsilon} \int_{(t-u_1)/\epsilon}^{t/\epsilon} I(|s_1-s_2|\le \epsilon^{-1/2}) ds_1 ds_2 = O(\epsilon^{1/2})\rightarrow0 
	\]
	as $\epsilon\downarrow 0$. For the second term, we use (\ref{eq:3.s}) to obtain the upper bound 
	\begin{align*}
	&\epsilon\int_{(t-u_2)/\epsilon}^{(t-u_1)/\epsilon} \int_{(t-u_1)/\epsilon}^{t/\epsilon} I(|s_1-s_2|> \epsilon^{-1/2}) \E_x f_c(X(s_2)) O(\E_x v(X(s_2)))e^{-\alpha(s_1-s_2)} ds_1 ds_2 \\
	&\, \le \epsilon e^{-\alpha \epsilon^{-1/2}} \|f\|\int_{(t-u_2)/\epsilon}^{(t-u_1)/\epsilon} \int_{(t-u_1)/\epsilon}^{t/\epsilon} O(\E_x v(X(s_2)))ds_1 ds_2\rightarrow 0 
	\end{align*} 
	as $\epsilon\downarrow 0$. Consequently, (\ref{eq:4.1}) equals 
	\begin{align*}
	{{2}\,}\epsilon\int_{0}^{\infty}k(u_1)\int_{u_1}^{\infty}k(u_2) \E_x (A_c(t/\epsilon) - A_c((t-u_1)/\epsilon))^2 du_2du_1 + o(1)
	\end{align*}
	as $\epsilon\downarrow 0$. But (\ref{eq:3.7}) proves that 
	\[
	\epsilon\E_x (A_c(t/\epsilon) -A_c((t-u)/\epsilon))^2 \rightarrow 
	\begin{cases}
	\sigma^2 u,& 0\le u\le t\\
	\sigma^2 t,              & u>t
	\end{cases}
	\]
	uniformly in $0\le u\le t$. As a consequence, (\ref{eq:4.1}) equals $\eta^2 + o(1)$ as $\epsilon\downarrow 0$. 
	
	The third derivative term can be handled similarly as in Theorem 3, thereby yielding the proof of the result. 
\end{proof}


\begin{thebibliography}{}
	
	\bibitem[\protect\citeauthoryear{Billingsley}{Billingsley}{1968}]{billingsley1968convergence}
	Billingsley, P. (1968).
	\newblock {\em Convergence of Probability Measures}.
	\newblock John Wiley \& Sons, New York.
	
	\bibitem[\protect\citeauthoryear{Br{\'e}maud}{Br{\'e}maud}{1981}]{bremaud1981point}
	Br{\'e}maud, P. (1981).
	\newblock {\em Point Processes and Queues: Martingale Dynamics}.
	\newblock Springer, New York.
	
	\bibitem[\protect\citeauthoryear{Daley and Vere-Jones}{Daley and
		Vere-Jones}{1988}]{daley1988}
	Daley, D.~J. and D.~Vere-Jones (1988).
	\newblock {\em An Introduction to the Theory of Point Processes}.
	\newblock Springer, New York.
	
	\bibitem[\protect\citeauthoryear{Ethier and Kurtz}{Ethier and
		Kurtz}{1986}]{ethier1986markov}
	Ethier, S.~N. and T.~G. Kurtz (1986).
	\newblock {\em Markov Processes, Characterization and Convergence}.
	\newblock John Wiley \& Sons, New York.
	
	\bibitem[\protect\citeauthoryear{Gibbs and Su}{Gibbs and
		Su}{2002}]{gibbs2002choosing}
	Gibbs, A.~L. and F.~E. Su (2002).
	\newblock On choosing and bounding probability metrics.
	\newblock {\em International Statistical Review\/}~{\em 70\/}(3), 419--435.
	
	\bibitem[\protect\citeauthoryear{Hawkes}{Hawkes}{1971}]{hawkes1971spectra}
	Hawkes, A.~G. (1971).
	\newblock Spectra of some self-exciting and mutually exciting point processes.
	\newblock {\em Biometrika\/}~{\em 58\/}(1), 83--90.
	
	\bibitem[\protect\citeauthoryear{Kallenberg}{Kallenberg}{1997}]{kallenberg1997foundations}
	Kallenberg, O. (1997).
	\newblock {\em Foundations of Modern Probability}.
	\newblock Springer, New York.
	
	\bibitem[\protect\citeauthoryear{Khasminskii, Yin, and Zhang}{Khasminskii
		et~al.}{1996}]{khashinskii1996asymptotic}
	Khasminskii, R.~Z., G.~Yin, and Q.~Zhang (1996).
	\newblock Asymptotic expansions of singularly perturbed systems involving
	rapidly fluctuating {Markov} chains.
	\newblock {\em SIAM Journal on Applied Mathematics\/}~{\em 56\/}(1), 277--293.
	
	\bibitem[\protect\citeauthoryear{Massey and Whitt}{Massey and
		Whitt}{1993}]{massey1993networks}
	Massey, W.~A. and W.~Whitt (1993).
	\newblock Networks of infinite-server queues with nonstationary {Poisson}
	input.
	\newblock {\em Queueing Systems\/}~{\em 13\/}(1-3), 183--250.
	
	\bibitem[\protect\citeauthoryear{Massey and Whitt}{Massey and
		Whitt}{1998}]{massey1998uniform}
	Massey, W.~A. and W.~Whitt (1998).
	\newblock {Uniform acceleration expansions for Markov chains with time-varying
		rates}.
	\newblock {\em Annals of Applied Probability\/}~{\em 8\/}(4), 1130--1155.
	
	\bibitem[\protect\citeauthoryear{Zheng, Honnappa, and Glynn}{Zheng
		et~al.}{2018}]{zheng2018approximating}
	Zheng, Z., H.~Honnappa, and P.~W. Glynn (2018).
	\newblock Approximating performance measures for slowly changing non-stationary
	{Markov} chains.
	\newblock {\em \href{https://arxiv.org/abs/1805.01662}{arXiv preprint,
			1805.01662}\/}.
	
\end{thebibliography}

\end{document}